\documentclass[12pt,reqno]{article}

\usepackage{color}
\definecolor{webgreen}{rgb}{0,.5,0}
\definecolor{webbrown}{rgb}{.6,0,0}
\definecolor{redmaple}{rgb}{.73,.22,.05}
\definecolor{bluemaple}{rgb}{.29,0,.71}
\definecolor{extraproof}{rgb}{0,.5,.5}

\usepackage{amssymb}
\usepackage{amsmath}
\usepackage{amsthm}
\usepackage{amsfonts}
\usepackage{graphicx}
\usepackage{cancel}
\usepackage[colorlinks=true,
linkcolor=webgreen,
urlcolor=bluemaple,
filecolor=webbrown,
citecolor=webgreen]{hyperref}

\usepackage[symbol]{footmisc}

\usepackage{multirow}

\usepackage{enumitem,kantlipsum}

\usepackage{subfigure}

\setlength{\textwidth}{6.5in}
\setlength{\oddsidemargin}{.1in}
\setlength{\evensidemargin}{.1in}
\setlength{\topmargin}{-.1in}
\setlength{\textheight}{8.4in}

\begin{document}

\newtheorem{theorem}{Theorem}
\newtheorem*{theorem*}{Theorem}
\newtheorem{lemma}{Lemma}
\newtheorem{proposition}{Proposition}
\newtheorem*{proposition*}{Proposition}
\newtheorem{corollary}{Corollary}
\newtheorem*{corollary*}{Corollary}
\newtheorem{conjecture}{Conjecture}
\newtheorem{remark}{Remark}
\newtheorem{problem}{Problem}

\theoremstyle{definition}
\newtheorem{definition}{Definition}
\newtheorem*{definition*}{Definition}
\newtheorem{example}{Example}
\newtheorem*{example*}{Example}
\newtheorem{notation}{Notation}

\newcommand{\R}{{\mathbb R}}
\newcommand{\Q}{{\mathbb Q}}
\newcommand{\C}{{\mathbb C}}
\newcommand{\N}{{\mathbb N}}
\newcommand{\Z}{{\mathbb Z}}

\newcommand{\seqnum}[1]{\href{https://oeis.org/#1}{#1}}

\newcommand{\doi}[1]{\href{http://doi.org/#1}{DOI: #1}}

\newcommand{\arxiv}[1]{\href{https://arxiv.org/#1}{arXiv: #1}}

\makeatletter

\renewcommand\@makefntext[1]{%
\setlength\parindent{1em}%
\noindent
\mbox{$^\@thefnmark$~}{#1}}

\makeatother

\begin{center}
\vskip 1cm{\LARGE\bf 
An extension of Furstenberg's theorem

\vskip 0,3cm
of the infinitude of primes\footnote[2]{JP J. Algebra Number Theory Appl. 53(1) (2022), 21-43. \doi{10.17654/0972555522002}}}
\vskip 0,6cm
\large
F. Javier de Vega\\
King Juan Carlos University\\
Madrid, Spain\\
\href{mailto:javier.devega@urjc.es}{\tt javier.devega@urjc.es} \\
\end{center}

\vskip .2 in

\begin{abstract}
The usual product $m\cdot n$ on $\Z$ can be viewed as the sum of $n$ terms of an arithmetic progression whose first term is $a_{1}=m-n+1$ and whose difference is $d=2$. Generalizing this idea, we define new similar product mappings, and we consider new arithmetics that enable us to extend Furstenberg's theorem of the infinitude of primes. We also review the classic conjectures in the new arithmetics. Finally, we make important extensions of the main idea. We see that given any integer sequence, the approach generates an arithmetic on integers and a similar formula to $\Z \setminus \{-1,1\}$ arises.
\end{abstract}

\medskip
\noindent 2020 {\it Mathematics Subject Classification}: 11A41, 11A51, 11A05.

\medskip
\noindent \emph{Keywords: }Furstenberg’s proof, arithmetic progression, arithmetic generated by a sequence, polygonal numbers, Peano arithmetic.


\section{Introduction}\label{sec:intro}
In 1955, Furstenberg \cite{furstenberg} proposed a topological proof of the infinitude of primes. He considered the arithmetic progression topology on the integers, where a basis of open sets is given by subsets of the following form: for $a, b \in \Z$, $a>0$,  $S(a, b) = \{ n \cdot a  + b: n \in \Z \}$. Arithmetic progressions themselves are by definition open. He studied this family of open sets and reached the following expression:
\begin{equation}
\label{eq:1}
\bigcup_{p \mathrm{\, prime }} S(p, 0)=\Z \setminus \{-1, \ 1 \}.
\end{equation}

\noindent With \eqref{eq:1}, he concluded that the set of primes is infinite; see \cite{furstenberg} for more details.

Other extensions of this result propose some observations on the Furstenberg topological space (see \cite{golomb1,golomb2,lovas}) or present new variants of Furstenberg topological proof based on tools taken from commutative algebra (see \cite{knopfmacher,porubsky}). 

Our purpose will be different from the previous ones. We will see that in a way, every integer sequence $(a_{n})_{n>0}$ generates a formula similar to \eqref{eq:1}. The main idea is as follows:

The usual product $m\cdot n$ on $\Z$ can be viewed as the sum of $n$ terms of an arithmetic progression $(a_n)$ whose first term is $a_{1}=m-n+1$ and whose difference is $d=2$.
\begin{example*} $5\cdot 3 = (5-3+1)+5+7=3\cdot 5=(3-5+1)+1+3+5+7$.
\end{example*}

It is thus natural to consider the following questions:

\begin{enumerate}
	\item[I.] \label{introduction:it:1} What if one were to consider product mappings, similar to the usual case, by varying the first term $a_{1}$?
	\item[II.] \label{introduction:it:2} What if one were to consider product mappings by varying the distance $d=k$?
\end{enumerate}

Question II is more interesting than the first. By studying this question, we obtain the definition of a family of product mappings on integers.

\begin{definition*}
Given $m,n,k\in\Z$, $n>0$, we define  
\begin{equation*}
m\odot_{_{k}} n =(m-n+1)+(m-n+1+k)+\ldots+(m-n+1+k+\stackrel{(n-1)}{\ldots}+k)
\end{equation*}
as the $k$-\textit{arithmetic} product.
\end{definition*}

In connection with the above result, for each $k \in \Z$, the expression ``\textit{given a} $k$-\textit{arithmetic}'' refers to the fact that we are going to work with integers, the sum, the new product and the usual order. This means that we are going to work on $\displaystyle \mathcal{Z}_k=\{ \Z , + ,\odot_{_{k}} , < \}$. Clearly, $\mathcal{Z}_2$, the $2$-\textit{arithmetic}, will be the usual arithmetic.

Characterizing the concept of divisor and prime number in the new arithmetics, we obtain the following theorem:

\begin{theorem*}
Given a $k$-\textit{arithmetic}, the primes (\textit{arith} $k$) are:
\begin{itemize}
\item \textit{The usual primes if} $k \in E= \{ \ldots ,-4,-2,0,2,4,6,\ldots \}$.
\item \textit{The usual powers of two if} $k \in O= \{ \ldots ,-3,-1,1,3,\ldots \}$.
\end{itemize}
\end{theorem*}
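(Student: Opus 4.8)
The plan is to trade the recursive definition of $\odot_{_{k}}$ for a closed form and then read the divisibility structure straight off that form. Summing the arithmetic progression with first term $m-n+1$, common difference $k$ and $n$ terms, I would obtain
\begin{equation*}
m\odot_{_{k}} n \;=\; mn+(k-2)\binom{n}{2},
\end{equation*}
checking it against the formula $\tfrac{n}{2}\bigl(2a_{1}+(n-1)k\bigr)$, against the worked examples (e.g.\ $7\odot_{_{3}}5=35+10=45$), and against $k=2$, where it collapses to the usual product. This identity is the engine for everything that follows; in particular the second slot $n$ is distinguished, since the term $\binom{n}{2}$ breaks the symmetry between the two arguments.

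Next I would fix the second factor and compute the multiples. Because $q\odot_{_{k}} d=qd+(k-2)\binom{d}{2}$ is linear in $q$ with slope $d$, letting $q$ run through $\Z$ sweeps out the arithmetic progression
\begin{equation*}
M_{k}(d)\;=\;(k-2)\binom{d}{2}+d\,\Z ,
\end{equation*}
so it is natural to declare that $d$ divides $N$ in $\mathcal{Z}_{k}$ exactly when $N\in M_{k}(d)$, i.e.\ $N\equiv(k-2)\binom{d}{2}\pmod{d}$. The whole theorem then hinges on one residue computation: $\binom{d}{2}\equiv 0\pmod{d}$ when $d$ is odd (as $\tfrac{d-1}{2}\in\Z$), whereas $\binom{d}{2}\equiv\tfrac{d}{2}\pmod{d}$ when $d$ is even. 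Consequently $(k-2)\binom{d}{2}\equiv 0\pmod{d}$ for every $d$ when $k\in E$ (the factor $k-2$ is even), while for $k\in O$ it is $\equiv 0$ for odd $d$ but $\equiv\tfrac{d}{2}$ for even $d$.

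From here the two bullet points drop out. For $k\in E$ one has $M_{k}(d)=d\,\Z$ for all $d$, so divisibility in $\mathcal{Z}_{k}$ is exactly ordinary divisibility and the primes are the ordinary primes. For $k\in O$ the odd moduli still give $M_{k}(d)=d\,\Z$, but an even $d=2^{a}t$ (with $t$ odd) gives $M_{k}(d)=\tfrac{d}{2}+d\,\Z$; unwinding $N\equiv\tfrac{d}{2}\pmod{d}$ shows $d\mid N$ precisely when $v_{2}(N)=a-1$ and $t$ divides the odd part of $N$. I would then argue in two directions. If $N=2^{r}$, its only divisors are $1$ and $2^{r+1}$, and neither lies strictly between $1$ and $N$, so $N$ is prime. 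If instead $N>1$ is not a power of two, it has a proper divisor below it: the modulus $2$ when $N$ is odd (since every odd number lies in $M_{k}(2)=1+2\,\Z$), and the odd part of $N$ when $N$ is even. Hence the primes are exactly the powers of two.

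The step I expect to be delicate is not the algebra but pinning down the right notion of ``prime''. In $\mathcal{Z}_{k}$ a number need not divide itself and can have divisors larger than itself --- for instance $4=4\odot_{_{1}}8$ --- so primality cannot be phrased as ``not a nontrivial product'' without accidentally declaring the powers of two composite. The correct reading, consistent with the classical case $k=2$, is that $N>1$ is prime when it has no divisor $d$ with $1<d<N$; I would make this explicit before running the argument above. The remaining routine points --- the treatment of $1$ as a unit and of the negative integers, handled exactly as the signs $\pm1$ are in Furstenberg's original formulation --- I would dispatch at the end.
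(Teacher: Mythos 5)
Your proof is correct, and its mathematical core is the same as the paper's: the paper proves a divisor lemma (Lemma~\ref{lemma:1}) by checking when the quotient $a \oslash_{k} d = \tfrac{a}{d}+(d-1)\bigl(1-\tfrac{k}{2}\bigr)$ is an integer, split by the parity of $k$ and $d$, and your congruence criterion $N \equiv (k-2)\binom{d}{2} \pmod{d}$ together with the residue computation $\binom{d}{2} \equiv 0$ or $\tfrac{d}{2} \pmod{d}$ is exactly that same parity analysis in different clothing; likewise your valuation condition $v_{2}(N)=a-1$, $t \mid \mathrm{odd}(N)$ restates the paper's ``usual divisors of $2a$ except the even usual divisors of $a$,'' and your endgame (compute the full divisor set $\{1,2^{r+1}\}$ of a power of two; exhibit an intermediate odd divisor otherwise) mirrors the paper's Theorem~\ref{theorem:2}. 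The one genuine divergence is the definition of prime, which you had to reconstruct: you take ``$N>1$ with no divisor $d$ satisfying $1<d<N$,'' whereas the paper (Definition~\ref{def:5}) takes ``$N>1$ with exactly two divisors $(arith\ k)$,'' after observing that $1$ and $N$ (for $k$ even), respectively $1$ and $2N$ (for $k$ odd), always divide $N$. These notions coincide here, and your argument in fact establishes the theorem under the paper's definition as well: for $N=2^{r}$ you show the divisor set is exactly $\{1,2^{r+1}\}$ (two divisors), and for $N$ not a power of two your exhibited divisor ($2$ if $N$ is odd, $\mathrm{odd}(N)$ if $N$ is even) is a third divisor distinct from the trivial pair $1$ and $2N$. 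So nothing is missing; but if this were to be spliced against the paper's definitions, you would want to state that equivalence explicitly rather than leave it as the flagged ``delicate step.''
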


Now, using the previous result, we can apply Furstenberg's proof on $\mathcal{Z}_{k}$. 

\begin{theorem*}
For all integer $k$, there are infinitely many primes on $\mathcal{Z}_{k}$.
\end{theorem*}

This theorem is a trivial consequence of the previous one, but the novelty of the proof that we offer is the application of Furstenberg's method on $\mathcal{Z}_{k}$. Roughly speaking, we define $S_{k}(a, b) = \{ n\odot_{_{k}} a  + b: n \in \Z \}$ and obtain:
\begin{equation} \label{eq:2}
\begin{split}
&k \in E \Rightarrow \bigcup_{p \mathrm{\, prime \ }(arith \ k)} S_{k}(p, 0)=\Z \setminus \{-1, \ 1 \}. \\
&k \in O \Rightarrow \bigcup_{p \mathrm{\, prime \ }(arith \ k)} S_{k}(p, 0)=\Z \setminus \{0 \}.
\end{split} \end{equation}
\noindent With \eqref{eq:2}, we will prove the result. The above formula relates the usual primes and the powers of two.

Another interesting point is to study the classical conjectures in the new arithmetics. One result in this line is the following.

\begin{theorem*}
If Goldbach's conjecture is true in the usual arithmetic, then the conjecture must be true in $\mathcal{Z}_k, \ k \in E$.
\end{theorem*}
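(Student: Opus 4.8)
The plan is to reduce the statement to ordinary Goldbach by checking that all three ingredients of the conjecture---the primes, the addition, and the notion of an even number---either coincide with or embed into their usual counterparts when $k \in E$. The genuinely hard analytic content (Goldbach itself) is assumed, so what remains is a translation argument. The one nontrivial input I would invoke is the primality theorem stated above: for $k \in E$ the primes $(arith\ k)$ are exactly the usual primes.

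First I would record a closed form for the product. Summing the arithmetic progression gives, for $n \ge 1$,
\[
m \odot_{_{k}} n = \sum_{j=0}^{n-1}\bigl(m-n+1+jk\bigr) = n(m-n+1) + k\,\frac{n(n-1)}{2} = mn + \frac{k-2}{2}\,n(n-1).
\]
When $k \in E$ the coefficient $c=\frac{k-2}{2}$ is an integer, so
\[
m \odot_{_{k}} n = n\bigl(m + c(n-1)\bigr),
\]
which is an \emph{ordinary} multiple of $n$. In particular every $\odot_{_{k}}$-product in which one factor equals $2$ is an ordinary even number: $m \odot_{_{k}} 2 = 2(m+c)$, while $2 \odot_{_{k}} n = n\bigl(2+c(n-1)\bigr)$ is even for each parity of $n$. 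Hence, whichever one-sided convention the definition of ``divisible by $2$ $(arith\ k)$'' uses, the even numbers $(arith\ k)$ form a subset of the ordinary even numbers.

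With these facts the implication follows. Let $N$ be an even number $(arith\ k)$ with $N>2$. By the previous paragraph $N$ is an ordinary even integer greater than $2$, so, assuming ordinary Goldbach, $N = p + q$ for ordinary primes $p,q$. By the primality theorem $p$ and $q$ are also primes $(arith\ k)$, and since the addition in $\mathcal{Z}_k$ is the usual addition, $N=p+q$ already exhibits $N$ as a sum of two primes $(arith\ k)$. This is exactly Goldbach's conjecture in $\mathcal{Z}_k$.

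The only real obstacle is conceptual rather than computational: one must pin down the correct meaning of ``even'' in $\mathcal{Z}_k$, because $\odot_{_{k}}$ is noncommutative (indeed $1$ is only a right identity), so divisibility by $2$ on the left and on the right need not agree. The computation above sidesteps this by showing that \emph{both} $m\odot_{_{k}}2$ and $2\odot_{_{k}}n$ are ordinary even, so the inclusion ``even $(arith\ k)$ $\subseteq$ even (usual)'' holds under either reading; this inclusion, together with the coincidence of primes and of addition, is all that the reduction requires. A final routine check is that the threshold ``$N>2$'' transfers, which is immediate since $\mathcal{Z}_k$ carries the usual order.
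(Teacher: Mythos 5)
Your proof is correct and rests on exactly the same key input as the paper's: the Fundamental $k$-arithmetic theorem identifying the primes $(arith \ k)$, $k \in E$, with the usual primes; since addition and order are unchanged, the reduction to ordinary Goldbach is then immediate, which is precisely the paper's one-line argument. Your extra care about what ``even'' means in $\mathcal{Z}_k$ is not needed for the paper's primary formulation (its $k$-Goldbach property is defined over the fixed set $H=\{6,8,10,\ldots\}$ of ordinary even numbers), but it mirrors the paper's own alternative formulation $G_k^{*}$, where the even numbers are taken to be $\{a \odot_k 2 : a \in \Z\}$ and one checks, via the paper's formula (6), that for $k \in E$ this set consists exactly of the ordinary even integers.
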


In other words, if $k$ is even, the conjecture is equivalent in all $k$-\textit{arithmetics}. This result is notable because the $k$-\textit{arithmetic} product ($\odot_{_{k}}$) is not commutative and not associative if $k \neq 2$. We will also revise Collatz conjecture.

Finally, we make important extensions of the main idea. We see that given any integer sequence, the approach will generate an arithmetic on integers. We will evaluate, via an algebraic computation system, some examples of arithmetics generated by sequences and attempt to study the primes, the squares and an analogue of formula \eqref{eq:2} in the simplest cases. In this part of the paper, new sequences and connections appear.

We have briefly presented the idea of this paper. We now develop the new arithmetics and their connections with Furstenberg's theorem and the classic conjectures in the usual arithmetic.

\section{Basic definitions and properties} \label{sec:2}
We now start to construct the definition of $\mathcal{Z}_k$, that is, the set of integers with the sum, usual order and a product mapping similar to the usual one.
\begin{definition}[$k$-\textit{arithmetic product} $\odot_{k}$]
\label{def:1}
Given $m,k\in \Z$, for all positive integers $n$, we define the following expression 
\begin{equation*}
m\odot_{_{k}} n =(m-n+1)+(m-n+1+k)+\ldots+(m-n+1+k+\stackrel{(n-1)}{\ldots}+k)
\end{equation*}
as the $k$-\textit{arithmetic} product.
\end{definition}

This arithmetic progression can be added to obtain the following formula:
\begin{equation}
\label{eq:3}
m \odot_{_{k}} n =(m-n+1)\cdot n + \frac{n\cdot (n-1)\cdot k}{2}.
\end{equation}

We take \eqref{eq:3} as Definition \ref{def:1} and consider $n \in \Z$. Observe that the usual product is used to define the $k$-\textit{arithmetic product} (which includes the usual one if $k=2$).

Our product can be formalized with the successor operation in Peano arithmetic.
Recall the definition of the product between two natural numbers $m$, $n$ using the successor operation $S(n)$: $P(m,n)$ is a number such that
\begin{equation} \label{eq:4}
\begin{split}
&\bullet P(m,1)=m. \\
&\bullet P(m,S(n))=m+P(m,n).
\end{split}
\end{equation}                  

Similarly to \eqref{eq:4}, we can consider  $\mathcal{N} = \{1, S, +, < \}$ (Peano arithmetic with only the successor operation, the sum and the usual order) and define the following product operation:

\begin{definition}[$t$-\textit{Peano product}]
\label{def:2}
Given $t \in \mathcal{N}$, to every pair of numbers $m,n \in \mathcal{N}$, we may assign in exactly one way a natural number, called $P_t(m,n)$, such that
\begin{equation*}
\begin{split}
&\bullet P_t(m,1)=m. \\
&\bullet P_t(m,S(n))=m+P_t(m+t,n).
\end{split}
\end{equation*}
$P_t(m,n)$ is called the $t$-\textit{Peano product} of $m$ and $n$.
\end{definition}

Now, we can relate the $k$-\textit{arithmetic} product and the $t$-\textit{Peano product}.
\begin{proposition} Given $m,n,t\in\mathcal{N}$, then $P_{t}(m,n)=m\odot_{_{t+2}} n$. 
\label{propo:1} 
\end{proposition}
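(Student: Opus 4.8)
The plan is to proceed by induction on the second argument $n$, using the closed-form expression \eqref{eq:2} (which the paper adopts as the definition of $\odot_{_{k}}$) to evaluate $m \odot_{_{t+2}} n$ explicitly. Because the Peano recursion in Definition \ref{def:2} replaces the first factor $m$ by the shifted value $m+t$, I would phrase the inductive claim as a statement quantified over \emph{all} $m$: for each fixed $n \in \N$, the identity $P_t(m,n) = m \odot_{_{t+2}} n$ holds simultaneously for every $m \in \N$. Carrying the universal quantifier on $m$ through the induction is the point that makes the argument work, since it is exactly what lets the induction hypothesis be applied to the shifted argument $m+t$ at the next step.

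For the base case $n=1$, both sides reduce to $m$: directly $P_t(m,1)=m$ by Definition \ref{def:2}, while \eqref{eq:2} gives $m \odot_{_{t+2}} 1 = (m-1+1)\cdot 1 + \tfrac{1\cdot 0 \cdot (t+2)}{2} = m$. For the inductive step I would expand $P_t(m,S(n)) = m + P_t(m+t,n)$ via the recursion, then apply the induction hypothesis to rewrite $P_t(m+t,n)$ as $(m+t)\odot_{_{t+2}} n$. The goal then becomes the purely algebraic identity
\begin{equation*}
m + (m+t)\odot_{_{t+2}} n = m \odot_{_{t+2}} (n+1),
\end{equation*}
which can be checked by substituting \eqref{eq:2} into both sides.

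The main (and essentially only) obstacle is this final algebraic verification, though it is routine rather than deep. Expanding the left-hand side with \eqref{eq:2} gives $m + (m+t-n+1)n + \tfrac{n(n-1)(t+2)}{2}$, while the right-hand side is $(m-n)(n+1) + \tfrac{n(n+1)(t+2)}{2}$; comparing the two, the first summands differ by $n(t+2)$ and the triangular-coefficient terms differ by $-n(t+2)$, so the two discrepancies cancel exactly and the identity holds. I would highlight the bookkeeping reason behind the shift in the index: passing from $n$ to $n+1$ lengthens the underlying arithmetic progression by one term, and the gap that this new term must bridge absorbs both the recursive shift $t$ and the intrinsic $+2$ already present in the usual product. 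This is precisely why the correspondence reads $P_t \leftrightarrow \odot_{_{t+2}}$, and it explains conceptually why no nonlinear interaction between $m$, $n$, and $t$ survives the cancellation.
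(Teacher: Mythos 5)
Your proof is correct, but it takes a genuinely different route from the paper's. The paper does not induct on the target identity: it unrolls the recursion of Definition \ref{def:2} completely, writing $P_t(m,n) = m + (m+t) + (m+2t) + \cdots + (m+(n-1)t)$, sums this arithmetic progression to obtain the closed form $mn + \tfrac{n(n-1)t}{2}$, and then checks that Formula \eqref{eq:2} reduces $m \odot_{t+2} n$ to the same expression. You instead prove the identity by induction on $n$ with the hypothesis strengthened to hold for all $m$ simultaneously, and in the inductive step verify the algebraic identity $m + (m+t)\odot_{t+2} n = m \odot_{t+2}(n+1)$; in effect you show that $m \odot_{t+2} n$ satisfies the same base case and recursion that characterize $P_t$, so the two agree by the uniqueness built into Definition \ref{def:2}. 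Your algebra checks out: both sides equal $m(n+1) + \tfrac{n(n+1)t}{2}$. What your route buys is rigor: the paper's unrolling hides the induction inside an ellipsis, and your observation that the induction hypothesis must be quantified over all $m$ --- precisely so that it can be applied at the shifted argument $m+t$ --- is the genuine subtlety that any formalization must confront, and the paper never addresses it. What the paper's route buys is brevity and thematic transparency: by exhibiting $P_t(m,n)$ explicitly as the sum of an arithmetic progression with difference $t$, it displays the very structure (products as sums of arithmetic progressions) that motivates the entire paper, and it also produces the intermediate closed form $P_t(m,n) = mn + \tfrac{n(n-1)t}{2}$, which your induction never makes explicit.
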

\begin{proof} We need only Definitions \ref{def:2} and \ref{def:1}.
\begin{equation*}
\begin{split}
P_t(m,n) & = P_t(m,S(n-1))=m+P_t(m+t,n-1)\\
& = m+P_t(m+t,S(n-2))=m+m+t+P_t(m+t+t,n-2)\\
& =\ldots= m+(m+t)+(m+t+t)+\ldots+(m+t+\stackrel{(n-1)}{\ldots}+t)\\
& = mn+\frac{n(n-1)t}{2}.
\end{split}
\end{equation*}
\noindent On the other hand:
\begin{equation*}
m \odot_{_{t+2}}n=(m-n+1)n+\frac{n(n-1)(t+2)}{2}=mn+\frac{n(n-1)t}{2}.
\end{equation*}
\noindent The result follows.
\end{proof}

We have studied this proposition to show that the new products are similar to the usual one and could have arisen from Peano arithmetic. Later, this result will motivate us to make interesting conjectures and heuristic reasoning. 
However, the following theorem provides a better understanding of the $k$-\textit{arithmetic} product.

\begin{theorem}[$k$-\textit{arithmetic polygonal theorem}]
\label{theorem:1}
The product $n\odot_{_{k}} n$ is the $n$th $(k+2)$-gonal number.
\end{theorem}
\begin{proof}
The formula of the $n$th $l$-gonal is $\frac{1}{2}n((l-2)n-(l-4))$. We have, by hypothesis, $k+2=l$; hence,

\[
\pushQED{\qed}
\frac{n((l-2)n-(l-4))}{2}=\frac{n(kn-(k-2))}{2}=\frac{nk(n-1)}{2}+n = n\odot_{_{k}}n. \qedhere
\popQED
\]
\renewcommand{\qedsymbol}{} \vspace{-\baselineskip}
\end{proof}

This theorem is easy to prove but has great significance. The $k$-\textit{arithmetic} product generalizes the usual product in the following way: the \textit{square of a number} is a square (polygon) in the usual arithmetic but  a triangle in $1$-\textit{arithmetic}, a pentagon in $3$-\textit{arithmetic}, a hexagon in $4$-\textit{arithmetic}, etc. That is, $4\odot_{_{2}} 4=16$ is the fourth square, whereas $4\odot_{_{1}} 4=10$ is the fourth triangular number and $4\odot_{_{3}} 4=22$ is the fourth pentagonal number. If we want to calculate the 5th pentagonal number, we can do $5\odot_{_{3}} 5=35$.

From now on, we will work on $\mathcal{Z}_k=\{ \Z , + ,\odot_{_{k}}, < \}$. If $k\neq 2$, the $k$-\textit{arithmetic} product $\odot_{_{k}}$ is not commutative and not associative; thus, the group or ring structures are not considered in this paper. However, we have elementary algebraic properties that connect the usual product with the others. For instance:

\begin{proposition} 
\label{propo:2}
Given $a, b, c, d, k \in \Z$, the following properties are satisfied:
\begin{enumerate}
\item The $k$-\textit{arithmetic} product is not associative in general.\\
If $k\neq 2$ and $c\neq \{ 0, 1 \}$, then $(a\odot_{_{k}} b)\odot_{_{k}} c \neq a \odot_{_{k}}(b\odot_{_{k}} c)$. \label{proposition:2:1}
\item The $k$-\textit{arithmetic} product is not commutative in general but \\
$a\odot_{_{k}} (1-a)=(1-a)\odot_{_{k}} a$. \label{pro:2:2}
\item $(a-b)\cdot(c+d)=a\odot_{_{k}}c+a\odot_{_{k}}d-b\odot_{_{k}}c-b\odot_{_{k}}d$. \label{proposition:2:3}
\item $(a+b)\odot_{_{k}}(a+b)=a\odot_{_{k}}a+b\odot_{_{k}}b+k\cdot a \cdot b$. \label{proposition:2:4}
\item $(a-b)^2=a\odot_{_{k}}a+b\odot_{_{k}}b-b\odot_{_{k}}a-a\odot_{_{k}}b$. \label{proposition:2:5}
\item $a \odot_{k} (-b)=(k-2-a) \odot_{k} b$. \label{proposition:2:6}
\end{enumerate}
\end{proposition}
\begin{proof}
Only we have to use formula \eqref{eq:3}. 
\end{proof}

It is not the purpose of this paper to study these types of properties.

The author does not known references where these alternative varieties of integer multiplication appear. Applegate et al. \cite{applegate1,applegate2} worked on original arithmetics but there are not connections. The $k$-\textit{arithmetic} product symbol $\odot_{k}$ is also use in ``tropical mathematics'' \cite{richter-gebert}, where addition and multiplication are defined by $x \oplus y = \min\{x,y\}$, $x \odot y = x + y$ but there is no connection either.

In the following chapter, we will extend the definition of divisor and prime number and obtain the fundamental theorem that will allow us to extend Furstenberg's theorem of the infinitude of primes.

\section{Divisors and primes} \label{sec:3}
\begin{definition}[$k$-\textit{arithmetic divisor}]
\label{def:3}
\noindent Given a $k$-\textit{arithmetic}, an integer $d>0$ is called a divisor of $a$ (\textit{arith} $k$) if there exists some integer $b$ such that $a=b \odot_{_{k}} d$. We write: 
\begin{equation*}
d \mid a \ (arith \ k) \Leftrightarrow \exists b \in \Z \textrm{ such that } b\odot_{k} d=a.
\end{equation*}
\end{definition}

In other words, $d$ is the number of terms of the summation that represents the $k$-\textit{arithmetic} product (see the following example).

\begin{example}
\label{example:1}
Consider the following expression: $\displaystyle 6 \odot_{3} 5=2+5+8+11+14=40$. The number of terms is $5$; hence, we can say that $5$ is a divisor of $40$ in $3$-\textit{arithmetic}, that is, $5$ is a divisor of $40$ (\textit{arith} $3$). Notably, a divisor is always a positive number, and the number $6$ indicates where we should start the summation. However, we cannot be sure that $6$ is a divisor of $40$ (\textit{arith} $3$). Another point is to consider the expression $6 \odot_{3} (-5)$. Is $-5$ a divisor? We can use point \ref{proposition:2:6} of Proposition \ref{propo:2}: $6 \odot_{3} (-5)=(3-2-6)\odot_{3} 5 = -15$. We can say that $5$ is a divisor of $-15$ (\textit{arith} $3$).
\end{example}
To characterize the set of divisors, we define the $k$-\textit{arithmetic} quotient:

\begin{definition}[$k$-\textit{arithmetic quotient} $\oslash_{k}$]
\label{def:4}
\noindent Given a $k$-\textit{arithmetic}, an integer $c$ is called a quotient of $a$ divided by $b$ (\textit{arith} $k$) if and only if $c\odot_{_{k}} b=a$. We write: 
\begin{equation*}
a \oslash_{k} b = c \Leftrightarrow c\odot_{_{k}} b=a.
\end{equation*}
\end{definition}

By means of the following proposition, we can use the usual quotient to study the new one.
\begin{proposition}
\label{propo:3}
Given a $k$-\textit{arithmetic} and $a,b,k \in \Z$, $b \neq 0$, 
\begin{equation*}
a \oslash_{k} b=\frac{a}{b}+(b-1)\cdot(1-\frac{k}{2}).
\end{equation*}
\end{proposition}
\begin{proof}
$a \oslash_{k} b=c \Leftrightarrow (c-b+1)b+\frac{1}{2}b(b-1)k=a$. Solving the previous equation for $c$ leads to the result. 
\end{proof}
We must consider $\oslash_{k}$ in the following manner. If we want to write $a$ as the sum of $b$ terms of an arithmetic progression, then the quotient will give us the place to start the summation (see the following example).

\begin{example} Express $81$ as the sum of $6$ terms of an arithmetic progression whose difference is $3$.

\noindent\textit{Solution.} We can then obtain $81\oslash_{3} 6=\frac{81}{6}+5\cdot(1-\frac{3}{2})=11$. Hence, $81=11 \odot_{3} 6$. The first term is $11-6+1=6$, and the solution is: $6+9+12+15+18+21=81$. Clearly, $6$ is a divisor of $81$ (\textit{arith} $3$).
\end{example}

\begin{corollary}
\label{corollary:1}
Let $b>0$, $b$ is a divisor of $a$ (\textit{arith} $k$) $\Leftrightarrow$ $a \oslash_{k} b$ is an integer.
\end{corollary}
\begin{proof}
$a \oslash_{k} b = c \in \Z \ \ \stackrel{Def. \ref{def:4}}{\Leftrightarrow} \ \ c\odot_{k}b=a \ \ \stackrel{Def. \ref{def:3}}{\Leftrightarrow} \ \ b \mid a \ (arith \ k)$. 
\end{proof}

Consider Example \ref{example:1}: $40=6 \odot_{3} 5$ but $6$ is not a divisor of $40$ (\textit{arith} $3$) because $40 \oslash_{3} 6=\frac{40}{6}+5 \cdot (1-\frac{3}{2})=\frac{25}{6} \notin \Z$.

\medskip
For the upcoming Lemma and the rest of this paper, we use the following notation for even and odd numbers.
\begin{notation} We write the set of even and odd numbers as follows: 
\begin{itemize}
	\item $E= \{ \ldots ,-4,-2,0,2,4,6,\ldots \}$.
	\item $O= \{ \ldots ,-3,-1,1,3,5,7,\ldots \}$.
\end{itemize}
\end{notation}

\begin{lemma}
\label{lemma:1}
Given a $k$-\textit{arithmetic} and $a \in \Z$, the divisors of $a$ (\textit{arith} $k$) are:

\begin{enumerate}
	\item The usual divisors of $a$ if $k \in E$.
	\item The usual divisors of $2a$ except the even usual divisors of $a$ if $k \in O$.
\end{enumerate}
\end{lemma}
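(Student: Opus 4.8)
The plan is to reduce the whole statement to the integrality criterion already furnished by Corollary~\ref{corollary:1}: a positive integer $b$ is a divisor of $a$ $(arith \ k)$ exactly when $a \oslash_{k} b \in \Z$. Feeding in the explicit formula of Proposition~\ref{propo:3} and expanding, $a \oslash_{k} b = \frac{a}{b} + (b-1) - \frac{(b-1)k}{2}$, and since $(b-1)$ is already an integer the criterion simplifies to
\[
\frac{a}{b} - \frac{(b-1)k}{2} \in \Z .
\]
Thus the entire lemma becomes a question of when this single rational number is an integer, and the dichotomy $k \in E$ versus $k \in O$ is precisely the dichotomy controlling the term $\frac{(b-1)k}{2}$.

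First I would dispatch the case $k \in E$: writing $k=2j$ gives $\frac{(b-1)k}{2}=(b-1)j \in \Z$ for every $b$, so the criterion collapses to $\frac{a}{b}\in\Z$, i.e. $b \mid a$ in the usual sense. This is part~1. For $k \in O$ I would split on the parity of $b$. If $b$ is odd, then $(b-1)$ is even, so $\frac{(b-1)k}{2}$ is again an integer and the criterion is once more $b\mid a$; hence the admissible odd $b$ are exactly the odd usual divisors of $a$. If $b$ is even, then $(b-1)k$ is odd, so $\frac{(b-1)k}{2}$ is a half-odd-integer, and for the difference to land in $\Z$ the fraction $\frac{a}{b}$ must itself be a half-odd-integer; equivalently $\frac{2a}{b}$ must be an \emph{odd} integer, i.e. $b \mid 2a$ but $b \nmid a$.

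It then remains to recognize the set $\{\, b \text{ odd} : b\mid a \,\} \cup \{\, b \text{ even} : b \mid 2a,\ b \nmid a \,\}$ as ``the usual divisors of $2a$ except the even usual divisors of $a$.'' This bookkeeping is where I expect the only real friction. One checks that for odd $b$ one has $b\mid a \Leftrightarrow b\mid 2a$, so the odd divisors of $2a$ coincide with those of $a$ and none are removed; while among even $b$, removing the even divisors of $a$ from the even divisors of $2a$ leaves exactly those $b$ with $\frac{2a}{b}$ odd, matching the even-$b$ condition above. Matching the two descriptions termwise yields part~2 and finishes the proof. The substantive step is the half-integer argument in the even-$b$, odd-$k$ case; the even-$k$ case and the final set identity are routine verifications.
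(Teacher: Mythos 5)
Your proposal is correct and follows essentially the same route as the paper: both reduce the lemma to the integrality of $a \oslash_{k} b$ via Corollary~\ref{corollary:1} and Proposition~\ref{propo:3}, then split on the parity of $k$ and of the candidate divisor, with the half-odd-integer observation handling the even-divisor, odd-$k$ case. The only cosmetic difference is that you run each case as a single chain of equivalences (plus the explicit set-matching at the end), whereas the paper verifies the two implications of each case separately, sometimes returning to the definition of $\odot_{k}$; the content is the same.
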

\begin{proof}
We use Proposition \ref{propo:3} and Corollary \ref{corollary:1} in the following cases:
\begin{enumerate}
	\item[1.] $k \in E$. $d \mid a \Leftrightarrow d \mid a$ (\textit{arith} $k$).
	\item[2a.] $k \in O$. If $d$ is odd: $d \mid 2a \Leftrightarrow d \mid a$ (\textit{arith} $k$).
	\item[2b.] $k \in O$. If $d$ is even: $d \mid 2a$ and $d \nmid a \Leftrightarrow d \mid a$ (\textit{arith} $k$).
\end{enumerate}

We will prove Part 2b, leaving the rest to the reader.

\medskip
2b. $k \in O$. Suppose $d$ is an even usual divisor of $2a$ but $d \nmid a$:
\begin{equation*}
d \mid 2a \Rightarrow \exists h \in \Z \textrm{ such that } 2a=dh \Rightarrow a/d=h/2.
\end{equation*}

By hypothesis $d \nmid a$, hence $h/2 \notin \Z$ and $h$ is odd. Then,
\begin{equation*}
\begin{split}
a \oslash_{k} d &= a/d+(d-1)(1-k/2)=h/2+(d-1)(1-k/2)\\
&=(h-(d-1)k)/2+d-1 \in \Z.
\end{split}
\end{equation*}
The previous expression is an integer because $(h-(d-1)k)$ is even (difference of odd numbers). Hence, $d$ is a divisor of $a$ (\textit{arith} $k$).

\medskip
$k \in O$. Suppose $d$ is an even number and $d$ is a divisor of $a$ (arith $k$):
\begin{equation*}
\begin{split}
d \mid a \ (arith \ k)&\Leftrightarrow \exists b \in \Z \text{ such that } b \odot_{k}d=a \Leftrightarrow\\
&\Leftrightarrow \left\lbrace
\begin{array}{ll}
(b-d+1)+(d-1)k/2=a/d \notin \Z.\\
2(b-d+1)+(d-1)k=2a/d \in \Z.
\end{array}
\right.
\end{split}
\end{equation*}

\noindent  Hence, $d \nmid a$ and $d \mid 2a$ in the usual sense. 
\end{proof}

The following example is an interesting application of the lemma.
\begin{example} \label{example:3}
Express the number $12$ in all possible ways as a sum of an arithmetic progression whose difference is $3$.

\noindent \textit{Solution.} The divisors of $12$ (\textit{arith} $3$) are the usual divisors of $24$ except the even usual divisors of $12$: $\{1, \ \cancel{2}, \ 3, \ \cancel{4}, \ \cancel{6}, \ 8, \ \cancel{12}, \ 24\}$.

\begin{itemize}
\item $d=1\Rightarrow 12\oslash_{3}1=12 \Rightarrow 12=12\odot_{_{3}} 1 \Rightarrow 12=12$.
\item $d=3\Rightarrow 12\oslash_{3}3=3\Rightarrow 12=3\odot_{_{3}} 3 \Rightarrow$ $12=1+4+7$.
\item $d=8 \Rightarrow 12\oslash_{3}8=-2\Rightarrow 12=-2\odot_{_{3}}8\Rightarrow 12=-9-6-3-0+3+6+9+12$. 
\item $d=24 \Rightarrow 12\oslash_{3}24=-11 \Rightarrow 12=-11\odot_{_{3}} 24\Rightarrow 12=-34-31-28- \ldots +29+32+35$.
\end{itemize}
\end{example}

We can use this example to study the representation of a number as a sum of arithmetic progressions. We obtain easily the results previously studied by other authors \cite{bush,mason}. Also, we can examine the number of nondecreasing arithmetic progressions of positive integers with sum $n$, that is, the partition of a number into arithmetic progressions. See sequence number \seqnum{A049988} in the Online Encyclopedia of Integer Sequences (OEIS) \cite{oeis} and \cite{munagi}. However, we leave this approach for another time.

Let us now consider the primes (\textit{arith} $k$). Following the results above, an integer $a>1$ always has two divisors in any $k$-\textit{arithmetic}:
\begin{itemize}
\item If $k \in E$, $1$ and $a$ are divisors of $a$ (\textit{arith} $k$).
\item If $k \in O$, $1$ and $2a$ are divisors of $a$ (\textit{arith} $k$).
\end{itemize}

Thus, we can write the following definition.

\begin{definition}[$k$-\textit{arithmetic prime}]
\label{def:5}
An integer $p>1$ is called a prime (\textit{arith} $k$), or simply a $k$-\textit{prime}, if it has only two divisors (\textit{arith} $k$). An integer greater than $1$ that is not a prime (\textit{arith} $k$) is termed a composite (\textit{arith} $k$).
\end{definition}

With Lemma \ref{lemma:1}, it is easy to characterize the primes (\textit{arith} $k$).

\begin{theorem}[\textit{Fundamental} $k$-\textit{arithmetic theorem}]
\label{theorem:2}
Given a $k$-\textit{arithmetic}, the primes (\textit{arith} $k$) are:
\begin{enumerate}
\item The usual primes if $k \in E= \{ \ldots ,-4,-2,0,2,4,6,\ldots \}$. \label{th2:it:1}
\item The powers of two if $k \in O= \{ \ldots ,-3,-1,1,3,5,7,\ldots \}$. \label{th2:it:2}
\end{enumerate}
\end{theorem}
\begin{proof}
\ref{th2:it:1}. By Lemma \ref{lemma:1}, if $k \in E$, then $d \mid a \Leftrightarrow d \mid a$ (\textit{arith} $k$):
\begin{equation*}
\begin{split}
p \text{ usual prime } &\Leftrightarrow 1 \mid p \text{ and } p \mid p \Leftrightarrow 1 \mid p \ (\textit{arith } k) \text { and } p \mid p \ (\textit{arith } k) \\
&\Leftrightarrow p \text{ is prime } (\textit{arith }k).
\end{split}
\end{equation*}

\ref{th2:it:2}. $k \in O$. If $a>1$ is not a power of two, then $a=2^{s}\cdot b$, $b$ odd and $s$$\in$$\{ 0,1,2,\ldots \}$. By Lemma \ref{lemma:1}, $b \mid a \ (arith \ k)$. In conclusion, $1, 2a, b$ are divisors of $a$ (\textit{arith} $k$); hence, $a$ is not prime (\textit{arith} $k$).

\medskip
$k \in O$. If $a>1$ is a power of two, then $a=2^{s}$, $s \in \{ 1,2, \ldots \}$. By Lemma \ref{lemma:1}, the divisors of $a$ (\textit{arith} $k$) are the usual divisors of $2a$ except the even usual divisors of $a$. Hence, the divisors of $a$ (\textit{arith} $k$) are: $\{1, \ \cancel{2}, \ \cancel{2^{2}}, \ldots, \ \cancel{2^{s}}, \ 2\cdot 2^{s} \}$. $1$ and $2^{s+1}$ are the unique divisors of $a$ (\textit{arith} $k$); thus, $a$ is $k$-prime. 
\end{proof}

We do not study in this paper the representation of an integer as a product of primes (\textit{arith} $k$). Because we are not in a unique factorization domain, we will have cases like the following: $15=8 \odot_{1} 2 = [(2 \odot_{1} 2)\odot_{1}(2 \odot_{1} 2)]\odot_{1} (2 \odot_{1} 2)$.

Now, we can extend Furstenberg’s theorem of the infinitude of primes.

\section{The extension of Furstenberg’s theorem}
\label{sec:4}
Attempting to adapt classic arguments about the infinity of primes, we observe an extension of Furstenberg’s theorem. The novelty of this proof is that we are applying Furstenberg's method in arithmetics with noncommutative and nonassociative products. We adapt the version of the original proof \cite{furstenberg} in \cite[p.~5]{aigner}.
\begin{theorem}
\label{theorem:3}
For all integer $k$, there are infinitely many primes on $\mathcal{Z}_{k}$.
\end{theorem}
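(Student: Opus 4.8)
The plan is to transplant Furstenberg's topological argument, in the form given in \cite{aigner}, into the $k$-arithmetic setting; the only genuinely new work is to compute the relevant union of basic open sets in each parity class of $k$. First I would record the shape of the sets $S_{k}(a,b)$. Using Formula \eqref{eq:2},
\[
n\odot_{_{k}}a+b=(n-a+1)\,a+\tfrac{a(a-1)k}{2}+b,
\]
and as $n$ runs over $\Z$ the term $(n-a+1)\,a$ runs over all multiples of $a$. Hence $S_{k}(a,b)$ is simply a two-sided arithmetic progression of common difference $a$, and letting $b$ vary recovers every residue class modulo $a$. Consequently the family $\{S_{k}(a,b)\}$ generates exactly the usual arithmetic progression topology on $\Z$, in which every nonempty open set is infinite (it contains a bi-infinite progression) and every basic set is clopen (its complement is the union of the remaining residue classes modulo its difference, each itself open).

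Next I would establish the two union formulas announced in the introduction, invoking Theorem \eqref{theorem:2} to identify the $k$-primes. For $k\in E$ one has $\tfrac{k}{2}\in\Z$, so $n\odot_{_{k}}p=p\bigl[(n-p+1)+(p-1)\tfrac{k}{2}\bigr]$ and therefore $S_{k}(p,0)=p\Z$; since the $k$-primes are the ordinary primes, $\bigcup_{p}S_{k}(p,0)$ is the set of integers divisible by some prime, namely $\Z\setminus\{-1,1\}$. For $k\in O$ the $k$-primes are the powers $2^{s}$ with $s\ge 1$, and here the constant equals $2^{s-1}(2^{s}-1)k$ with $(2^{s}-1)k$ odd, so $S_{k}(2^{s},0)$ is precisely the residue class $2^{s-1}\pmod{2^{s}}$. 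Writing any nonzero $m=2^{t}u$ with $u$ odd gives $m\equiv 2^{t}\pmod{2^{t+1}}$, so $m\in S_{k}(2^{t+1},0)$, while $0$ lies in none of these classes; hence $\bigcup_{s\ge 1}S_{k}(2^{s},0)=\Z\setminus\{0\}$.

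Finally I would run the Furstenberg contradiction uniformly across both cases. Each $S_{k}(p,0)$ is clopen, so if there were only finitely many $k$-primes the union $\bigcup_{p}S_{k}(p,0)$ would be a finite union of closed sets, hence closed, making its complement open. But that complement is $\{-1,1\}$ when $k\in E$ and $\{0\}$ when $k\in O$, a finite nonempty set, which cannot be open because every nonempty open set is infinite. This contradiction forces the set of $k$-primes to be infinite for every integer $k$.

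The topological bookkeeping is identical to the classical proof, so the step I expect to demand the most care is the computation of $\bigcup_{s}S_{k}(2^{s},0)$ for odd $k$: one must verify that each power-of-two progression lands in the class $2^{s-1}\pmod{2^{s}}$ and that these classes, indexed by the $2$-adic valuation, cover $\Z\setminus\{0\}$ exactly while avoiding $0$. This is the only place where the odd case diverges essentially from Furstenberg's original argument, and it is precisely where the dichotomy of Theorem \eqref{theorem:2} feeds into the topology.
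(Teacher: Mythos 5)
Your proposal is correct and follows essentially the same route as the paper: the same topology generated by the sets $S_{k}(a,b)$, the same two union formulas split by the parity of $k$ (resting on Theorem \ref{theorem:2}), and the same finite-union-of-closed-sets contradiction against the fact that nonempty open sets are infinite. The only cosmetic difference is that you identify $S_{k}(2^{s},0)$ directly as the residue class $2^{s-1} \pmod{2^{s}}$, which packages the paper's parity computations (including the verification that $0 \notin S_{k}(2^{t},0)$) into a single clean statement.
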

\begin{proof} For each $k \in \Z$, we are going to define a topology on $\mathcal{Z}_{k}$.

For $a,b \in \Z,a>0$, we set $S_{k}(a, b) = \{ n\odot_{_{k}} a  + b: n \in \Z \}$. Each set $S_{k}(a, b)$  is a infinite arithmetic progression whose difference is $a$ for all $k$. Now call a set $U \subseteq \Z$ open if either $U$ is empty, or if to every $h \in U$, there exists some $a,b \in \Z,a>0$ with $h \in S_{k}(a, b) \subseteq U$. As in the original proof, for each $k \in \Z$, we have a topology on $\mathcal{Z}_{k}$. Let us note two facts: 
\begin{enumerate}
\item[(A)] Any nonempty open set is infinite. \label{pointA}
\item[(B)] Any set $S_{k}(a, b)$ is closed. \label{pointB}
\end{enumerate}
Point (A) is clear. For (B), we observe that $\displaystyle S_{k}(a, b)=\Z \setminus \bigcup_{\substack{i=1}}^{a-1} S_{k}(a, b+i)$. Hence, $S_{k}(a, b)$ is the complement of an open set.

Now, we consider $\bigcup_{p} S_{k}(p, 0)$, where $p$ is prime (\textit{arith} $k$). There are two possibilities:
\begin{equation} \begin{split}
\label{eq:5}
&k \in E \Rightarrow \bigcup_{p \mathrm{\, prime \ }(arith \ k)} S_{k}(p, 0)=\Z \setminus \{-1, \ 1 \}. \\
&k \in O \Rightarrow \bigcup_{p \mathrm{\, prime \ }(arith \ k)} S_{k}(p, 0)=\Z \setminus \{0 \}.
\end{split} \end{equation}
The first possibility is easy to check. If $k \in E$, primes (\textit{arith} $k$) are the usual ones. Moreover, $S_{k}(p,0)=S_{2}(p,0)$. Since any number $h \neq 1,-1$ has a prime divisor $p$ and hence is contained in $S_{k}(p,0)$, the first possibility is proved.

If $k \in O$, primes (\textit{arith} $k$) are the powers of two. If $h=\pm \ 2^{s}\cdot m$, $m$ odd, $s \in \{0,1,\ldots \}$, then $h \in S_{k}(2^{s+1}, 0)$.

Additionally, if we suppose that $0 \in S_{k}(2^{t}, 0)$ for some $t \in \{ 1, 2, \ldots \}$, then there must exist $c \in \Z$ such that $c \odot_{k} 2^{t}=0$. However, $\displaystyle c \odot_{k} 2^{t}=0 \Leftrightarrow (2^{t}-1)(k-2)+2c=0$. This contradiction proves the second possibility because $(2^{t}-1)(k-2)$ is odd and $2c$ is even.

Finally, if primes (\textit{arith} $k$) were finite, then $\bigcup_{p} S_{k}(p, 0)$ would be a finite union of closed sets (by (B)), and hence closed. Consequently, $ \{-1, \ 1 \}$ and \{0\} would be open sets, in violation of (A). 
\end{proof}

Interestingly, the extension of the theorem consists of proving that there are infinitely many powers of two. Regardless, formula \eqref{eq:5} completes or extends Furstenberg's theorem of the infinitude of primes. As we will see later, we will adopt the previous argument when we study new arithmetics.

\section{Classic problems revisited}
\label{sec:5}
We now study some classic conjectures of number theory. We start with Goldbach's Conjecture and observe an interesting property. If $k \in E$, the conjecture is equivalent in all $k$-\textit{arithmetics}. If $k \in O$, the conjecture is false. The same result occurs with more important conjectures, which reminds us of what happens in geometry with the postulate of parallels: the concept of ``line between two points'' depends on the space we are considering. Understanding this idea, the euclidean parallel postulate is solved with simplicity (it can be true or false).

\begin{definition}[$k$-\textit{Goldbach property}]
Let $H=\{6,8,10,\ldots\}$. We say that $\mathcal{Z}_{k}$ has the $k$-\textit{Goldbach} property, denoted by $\mathcal{Z}_k \vDash G_k$, if for all $h \in H$, there exist $p_{1}$,  $p_{2}$ primes (\textit{arith} $k$) such that $p_1+p_2 = h$.
\end{definition}

The usual Goldbach conjecture could be translated: $\mathcal{Z}_{2} \vDash G_{2}$. Now, we have the following theorem.

\begin{theorem}[\textit{Relation with Goldbach's conjecture}]
\label{theorem:4}
If Goldbach's conjecture is true in the usual arithmetic, then the conjecture must be true in $\mathcal{Z}_k, \ k \in E$. Additionally, Goldbach's conjecture is false in $\mathcal{Z}_k, \ k \in O$. That is:
\begin{itemize}
\item If $k \in O$, then $\mathcal{Z}_k \vDash \neg G_k$.
\item If $k \in E$, then $\mathcal{Z}_2 \vDash G_{2} \Leftrightarrow \mathcal{Z}_k \vDash G_k$.
\end{itemize}
\end{theorem}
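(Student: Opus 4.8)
The plan is to reduce both bullet points entirely to Theorem \eqref{theorem:2}, which identifies the $k$-primes, together with one structural observation: the $k$-Goldbach property is phrased using the \emph{usual} addition $+$ (not the product $\odot_{_k}$) and the fixed target set $H=\{6,8,10,\ldots\}$. Consequently the new, noncommutative and nonassociative product never actually enters the statement $G_k$; only the set of $k$-primes does. Making this explicit is the conceptual heart of the argument, and once it is in place the two cases fall out quickly.

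First I would treat the even case. By Theorem \eqref{theorem:2}, when $k\in E$ the primes $(arith\ k)$ are exactly the usual primes. Since $G_k$ asks, for every $h\in H$, for primes $(arith\ k)$ $p_1,p_2$ with $p_1+p_2=h$ in the ordinary sense, the proposition $G_k$ is verbatim the proposition $G_2$: the same admissible summands, the same addition, the same targets $H$. Hence $\mathcal{Z}_2\vDash G_2$ and $\mathcal{Z}_k\vDash G_k$ are literally the same assertion, which yields the biconditional $\mathcal{Z}_2\vDash G_2 \Leftrightarrow \mathcal{Z}_k\vDash G_k$ at once, and in particular the stated one-way implication.

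Next I would treat the odd case. By Theorem \eqref{theorem:2}, when $k\in O$ the primes $(arith\ k)$ are precisely the powers of two $\{2^s : s\ge 1\}$, so to establish $\mathcal{Z}_k\vDash\neg G_k$ it suffices to exhibit a single $h\in H$ that is not a sum of two such powers. I would take $h=14$: writing $14=2^a+2^b$ with $a,b\ge 1$ and $a\le b$ gives $14=2^a(1+2^{b-a})$, forcing $a=1$ and $2^{b-a}=6$, which is impossible (and the case $a=b$ gives $14=2^{a+1}$, also impossible). Equivalently, one checks directly that the only sums of two powers of two not exceeding $14$ are $2+4,\,2+8,\,4+4,\,4+8,\,8+8$, none of which equals $14$. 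Thus $14\in H$ has no representation as a sum of two $k$-primes, and the $k$-Goldbach property fails.

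I do not expect a genuine obstacle here. The only point requiring any care is the verification in the odd case that some even $h\ge 6$ escapes every sum of two powers of two, and one concrete witness such as $14$ settles it. Everything else is an immediate consequence of the prime characterization in Theorem \eqref{theorem:2} combined with the observation that $G_k$ is a purely additive statement in which the $k$-product plays no role.
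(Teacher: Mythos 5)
Your proposal is correct and follows essentially the same route as the paper: both reduce the statement to Theorem \eqref{theorem:2}, observe that for $k \in E$ the property $G_k$ becomes literally the usual Goldbach statement (since $G_k$ involves only usual addition and the set of $k$-primes), and for $k \in O$ refute $G_k$ with the same witness $h=14$ not being a sum of two powers of two. Your write-up merely adds the explicit verification for $14$ and makes the ``purely additive'' observation explicit, which the paper leaves implicit.
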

\begin{proof}
The result is an obvious consequence of Theorem \ref{theorem:2}. If $k \in O$, primes (\textit{arith} $k$) are the powers of two, the conjecture is clearly false. For instance, $14$ in not the sum of two powers of two. If $k \in E$, primes (\textit{arith} $k$) are the usual primes, and the conjecture is equivalent in $\mathcal{Z}_k, \ k \in E$.
\end{proof}

This is an important fact because if $k\neq 2$, then $\odot_{_{k}}$ is not commutative and not associative. 

This type of argument can be made similarly for the twin prime conjecture, Sophie Germain conjecture and Euclid primes conjecture. We leave the formalization of these problems for another time and focus on the Collatz conjecture.
\begin{problem}[\textit{Relation with Collatz conjecture}]
\noindent The \textit{k-Collatz problem} is similar to the usual one but uses $\odot_{k}$ and $\oslash_{k}$.
\begin{equation*}
f_{k}(n) = \begin{cases} n\oslash_{k}2 & \mbox{if }2\mbox{ is a divisor of }n \ (arith \ k), \\ n\odot_{_{k}} 3+1 & \mbox{if }2\mbox{ is not a divisor of }n \ (arith \ k). \end{cases}
\end{equation*}

\noindent We consider the orbit of an integer $n$: $f_{k}(n) \rightarrow f_{k}(f_{k}(n)) \rightarrow \ldots$.
\end{problem}

\begin{example}
\label{example:4}
In this example, we consider the $17$-orbit varying $k$.
\begin{itemize}
\item $k=2$ (The usual Collatz conjecture): $17 \rightarrow 52 \rightarrow 26 \rightarrow 13 \rightarrow 40 \rightarrow 20 \rightarrow 10 \rightarrow 5 \rightarrow 16 \rightarrow 8 \rightarrow \mathbf{4} \rightarrow 2 \rightarrow 1 \rightarrow \mathbf{4} \ \ldots$
\item $k=6$ (There is a cycle of length $8$): $17 \rightarrow 64 \rightarrow 30 \rightarrow \stackrel{(10 \ steps)}{\ldots} \rightarrow \mathbf{34} \rightarrow 15 \rightarrow 58 \rightarrow 27 \rightarrow 94 \rightarrow 45 \rightarrow 148 \rightarrow 72 \rightarrow \mathbf{34}  \ \ldots$
\item $k=1700$ (There is a cycle of length $1124$): $17 \rightarrow 5146 \rightarrow 1724 \rightarrow  13 \rightarrow 513\rightarrow 1718 \rightarrow \stackrel{(24 \ steps)}{\ldots} \rightarrow \mathbf{3730} \rightarrow \stackrel{(1123 \ steps)}{\ldots} \rightarrow \mathbf{3730}  \ \ldots$
\item $k=1$ (The orbit diverges): $17 \rightarrow 9 \rightarrow 5 \rightarrow 3 \rightarrow 2 \rightarrow 4 \rightarrow 10 \rightarrow 28 \rightarrow 82 \rightarrow 244 \rightarrow 730 \rightarrow 2188 \ \ldots$
\item $k=5$ (The orbit diverges): $17 \rightarrow 7 \rightarrow 2 \rightarrow 16 \rightarrow 58 \rightarrow 184 \rightarrow 562 \rightarrow 1696 \rightarrow 5098 \rightarrow 15304 \ \ldots$
\item $k=17$ (The orbit diverges): $17 \rightarrow 1 \rightarrow -7 \rightarrow -11 \rightarrow -13 \rightarrow -14 \rightarrow 4 \rightarrow 58 \rightarrow 220 \rightarrow 706 \ \ldots$
\end{itemize}       
\end{example}

\begin{figure}[htp]
\centering
  \includegraphics[width=3.54in]{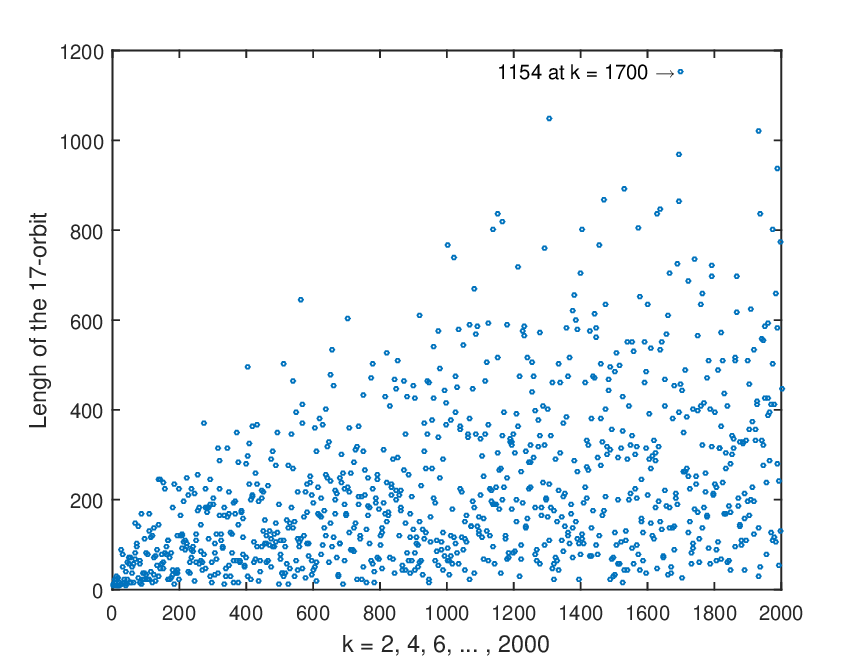}
\caption{Length of the $17$-orbit ($k \in E$).} \label{fig:1}
\end{figure}

Figure \ref{fig:1} represents the length of the $17$-orbit when $k \in E$. We can see that the length, depending of $k \in E$, is not trivial.

Example \ref{example:4} shows that if $k \in O$, the $n$-orbit diverges. This result is easy to prove. It also suggests a conjecture:
\begin{conjecture}
If $k\in E$ and $n$ is an integer, the $n$-orbit is periodic.
\end{conjecture}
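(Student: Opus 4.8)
The plan is to turn the dynamical definition of $f_k$ into an explicit pair of affine maps and then recognise the resulting system as a translate of a classical problem. First I would make the branching concrete: since $k\in E$, Lemma \ref{lemma:1} tells us that divisibility $(arith\ k)$ is ordinary divisibility, so ``$2$ is a divisor of $n\ (arith\ k)$'' means simply that $n$ is even. Using \eqref{eq:2} for $\odot_{k}$ and Proposition \ref{propo:3} for $\oslash_{k}$, I would then compute the two branches explicitly, obtaining
\begin{equation*}
f_k(n)=\tfrac{n}{2}+1-\tfrac{k}{2}\ \ (n\ \text{even}),\qquad f_k(n)=3n+3k-5\ \ (n\ \text{odd}),
\end{equation*}
which one checks against Example \ref{example:4} (for instance $k=6$ gives $17\mapsto 64\mapsto 30\mapsto 13\mapsto\cdots$).

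Second, I would eliminate the additive constants by a translation conjugacy. Setting $n=x+(2-k)$ — an even shift, so $x$ and $n$ always have the same parity — transforms $f_k$, in the $x$-coordinate, into the standardized generalized Collatz map
\begin{equation*}
T_c(x)=\tfrac{x}{2}\ \ (x\ \text{even}),\qquad T_c(x)=3x+c\ \ (x\ \text{odd}),\qquad c=k-1.
\end{equation*}
Since a translation is a bijection intertwining $f_k$ with $T_c$, the $n$-orbit is eventually periodic if and only if the corresponding $x$-orbit is; and on $\Z$ an orbit is eventually periodic exactly when it is bounded (it then takes only finitely many values). Thus the conjecture is equivalent to the statement that \emph{for every odd $c$, no $T_c$-orbit on $\Z$ diverges}. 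As $k$ ranges over $E$ the constant $c=k-1$ ranges over all of $O$, and $k=2$ gives $c=1$, i.e.\ exactly the classical $3x+1$ map.

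This last point is the main obstacle: the conjecture contains the classical Collatz conjecture as the sub-case $k=2$, and the $3x+c$ problems for the remaining odd $c$, all of which are open; any complete unconditional proof would settle Collatz and is therefore beyond current techniques. What I would actually carry out is a partial program. On the heuristic side I would record the standard multiplicative-drift estimate: averaging over the $2$-adic statistics of the halving runs, the geometric-mean factor per step of $T_c$ is $\sqrt{3}/2<1$, and crucially the additive constant $c$ influences only the location of the cycles, not this drift, so the heuristic is uniform in $k\in E$ and predicts that every orbit is eventually trapped in a cycle.

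On the rigorous side, for each fixed $k$ I would (i) bound the cycles, showing that only finitely many can meet a prescribed window $|x|\le N$ and enumerating them, and (ii) verify computationally that all residues funnel into these cycles up to large $N$, consistent with the cycle lengths shown in the figure of $17$-orbit lengths. For the degenerate subfamily with $3\mid(k-1)$ one obtains a genuine reduction rather than a hard case: writing $c=3c'$, every orbit eventually enters the $T_c$-invariant sublattice $3\Z$ (an odd step sends $x\mapsto 3(x+c')$, and one cannot halve indefinitely), and the conjugacy $x=3y$ identifies $T_{c}$ on $3\Z$ with $T_{c'}$, so one may always reduce to $\gcd(c,3)=1$. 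The honest conclusion of this program is the conjecture \emph{conditional} on the non-divergence of the $3x+c$ dynamics, supported by a uniform drift heuristic and by computation — which is the most that can be claimed without resolving Collatz itself.
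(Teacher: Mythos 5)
The statement you were asked to prove is, in the paper, exactly what it is labelled: a conjecture. The paper offers no proof of it whatsoever — only the numerical evidence of Example~\ref{example:4} (the $17$-orbits for $k=2,6,1700$ all falling into cycles) and the plot of orbit lengths — so there is no paper argument to compare yours against, and your refusal to claim a complete proof is the correct call rather than a defect.

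Your reduction itself is correct and is in fact sharper than anything the paper says about this conjecture. For $k\in E$, Lemma~\ref{lemma:1} does make the branching condition ``$n$ even,'' and both branch formulas check out: Proposition~\ref{propo:3} gives $n\oslash_k 2=\tfrac{n}{2}+1-\tfrac{k}{2}$, and Formula~\eqref{eq:2} gives $n\odot_k 3+1=3n+3k-5$, consistent with $17\mapsto 64\mapsto 30\mapsto 13$ at $k=6$. The translation $n=x+(2-k)$ preserves parity because $2-k$ is even, and it conjugates the two branches to $x\mapsto x/2$ and $x\mapsto 3x+(k-1)$, so the family $\{f_k : k\in E\}$ is, up to translation, precisely the family of $3x+c$ maps with $c$ odd; your descent for $3\mid c$ to $T_{c/3}$ on the invariant sublattice $3\Z$ is also correct. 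Hence the conjecture is equivalent to non-divergence of all $3x+c$ orbits on $\Z$ with $c$ odd, which already at $c=1$ is open. Two small inaccuracies are worth fixing. First, the conjecture ``contains'' only the non-divergence (boundedness) half of the classical Collatz conjecture, not the full statement: periodicity does not force orbits into $\{1,2,4\}$, and indeed on $\Z$ the classical map already has the cycles through $-1$ and $-5$, which are consistent with, not counterexamples to, the paper's conjecture. Second, the drift constant $\sqrt{3}/2$ is the per-step geometric mean of the shortcut map $x\mapsto(3x+c)/2$ on odd inputs, not of your $T_c$ as written (whose odd steps are always followed by a forced halving, giving $(3/4)^{1/3}$ per step); either form supports the same qualitative conclusion. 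None of this changes the bottom line: no unconditional proof of this statement can be expected by these or any currently known methods, and the conditional, heuristic-plus-computational program you outline is the honest ceiling — which is presumably why the paper states it as a conjecture.
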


The careful analysis in this section indicates that there are fundamental number properties $P$ that are equivalent on $\mathcal{Z}_{k}, \ k \in E$ and false on $\mathcal{Z}_{k}, \ k \in O$. A small modification in the definition of the product in Peano arithmetic, see Proposition \ref{propo:1}, leads to this suggestion, which should be studied with caution.

\section{Extension of the main idea}
\label{sec:6}
In this paper, we have seen that the usual product can be generated by the sequence $(a_n)=2,2,2,\ldots$ . With the same idea, we have considered other product mappings $\odot_{k}$ generated by the sequences $(a_n)=k,k,k,\ldots$ ($k \in \Z$). Following the same steps as in the previous sections, given an integer sequence $(a_n)=a_{1}, a_{2},\ldots, a_{n},\ldots$, we can define the product, the divisors, the quotient and the primes generated by $(a_n)$ as follows:
\begin{enumerate}
	\item \textit{Product generated by} $(a_n)$: $m$, $n \in \Z$, $n>0$,
\begin{equation*}m\odot_{a_{n}} n =(m-n+1)+(m-n+1+a_{1})+\ldots+(m-n+1+a_{1}+\ldots+a_{n-1}).\end{equation*}
	\item \textit{Divisors of an integer generated by} $(a_n)$: $d>0$,
\begin{equation*}d \mid a (\textit{arith } a_{n})\Leftrightarrow \exists b \in \Z \text{ such that } b\odot_{a_{n}} d=a.\end{equation*}
	\item \textit{Quotient generated by} $(a_n)$: $a$, $b \in \Z$, $b \neq 0$,
\begin{equation*}a \oslash_{a_{n}} b = c \Leftrightarrow c\odot_{a_{n}} b=a.\end{equation*}
	\item \textit{Prime generated by} $(a_n)$: An integer $p>1$ is called a prime (\textit{arith} $a_{n}$), or simply a $(a_{n})$-\textit{prime}, if it has only two divisors (\textit{arith} $a_{n}$). An integer greater than $1$ that is not a prime (\textit{arith} $a_{n}$) is termed composite (\textit{arith} $a_{n}$).
\end{enumerate}

Clearly, $m\odot_{a_{n}} n=(m-n+1)n+(n-1)a_{1}+(n-2)a_{2}+\ldots+1 \cdot a_{n-1}$. Thus, we can write the product generated by $(a_n)$ as follows:
\begin{equation} \label{eq:6}
m\odot_{a_{n}} n=(m-n+1)n+\sum_{i=1}^{n-1}(n-i)\cdot a_{i}.
\end{equation}

Initially, the definition applies to only positive integer $n$. However, if we can obtain a formula for $\sum_{i=1}^{n-1}(n-i)\cdot a_{i}$, we can easily extend the product for all integers, just like we did in \eqref{eq:3}.

Additionally, as in Proposition \ref{propo:3}, we can study the new quotient with the usual quotient: 
\begin{equation} \label{eq:7}
a \oslash_{a_{n}} b=\frac{1}{b} \cdot (a-\sum_{i=1}^{b-1}(b-i)a_{i})+b-1.
\end{equation}

Also, similarly to Corollary \ref{corollary:1}, it is easy to obtain the following result: 
\begin{equation} \label{eq:8}
b>0, \ b \text{ is a divisor of }a \ (arith \ a_{n}) \Leftrightarrow a \oslash_{a_{n}} b \text{ is an integer.}
\end{equation}

In the following lines, we evaluate, via an algebraic computation system, the previous results. We offer some examples of arithmetics generated by sequences and attempt to study the primes, the squares and an analogue of formula \eqref{eq:5} of Theorem \ref{theorem:3}, denoted by $\bigcup_{p}\Z \odot p$, in the simplest cases. 

\begin{example}
\label{example:5}
Obtain the expression of $\bigcup_{p}\Z \odot p$ when $(a_n)$ is an arithmetic progression whose first term is $a \in \Z$ and whose difference is $b \in \Z$.\\
\textit{Solution.} $(a_n)=a,a+b,a+2b,a+3b,\ldots$. In this case, we can obtain an explicit formula for the product $m \odot_{a_{n}} n$:
\begin{equation} \label{eq:9}
m \odot_{a_{n}} n := (m-n+1) \cdot n+\frac{ n\cdot(n-1)\cdot a}{2}+\frac{ n \cdot (n-1) \cdot (n-2) \cdot b}{6}.
\end{equation}

When the sequence $(a_n)$ is generated by a polynomial, we can always find a formula similar to \eqref{eq:9}. We can even consider the Euler-Maclaurin summation formula in \eqref{eq:6} and study the arithmetics generated by functions. However, we leave this approach for another time.

Varying $a$ and $b$, we obtain the following results for the ($a_{n}$)-primes:
\begin{enumerate}
	\item If $a \in O$ and $b\equiv 0$ (mod $3$), then the primes (\textit{arith} $a_{n}$) are:\\
	$2,4,8,16,32,64,128,256,512,1024, \ldots$ ($\{2^{s}: s \in \N \}$). 
	\item If $a \in O$ and $b\equiv 1$ (mod $3$), then the primes (\textit{arith} $a_{n}$) are:\\
	$2,6,8,18,24,32,54,72,96,128,162,216, \ldots$ ($\{2^{2s-1}\cdot 3^{t-1}: s,t \in \N \}$).
	\item If $a \in O$ and $b\equiv 2$ (mod $3$), then all $p>1$ is composite (\textit{arith} $a_{n}$).\label{example:5:3}
	\item If $a \in E$ and $b\equiv 0$ (mod $3$), then the primes (\textit{arith} $a_{n}$) are:\\ 
	$2,3,5,7,11,13,17,19,\ldots$ (usual primes).
	\item If $a \in E$ and $b\equiv 1$ (mod $3$), then the primes (\textit{arith} $a_{n}$) are:\\
	$3,9,27,81,243,729,\ldots$ ($\{3^{s}: s \in \N \}$).
	\item If $a \in E$ and $b\equiv 2$ (mod $3$), then the primes (\textit{arith} $a_{n}$) are:\\  
	$7,13,19,21,31,37,39,\ldots$ ($\{ 3^{s-1} \cdot p: s \in \N$, $p$ prime, $p\equiv 1$ (mod $6$)$\}$).
\end{enumerate}

In point \ref{example:5:3}, $a \in O$ and $b\equiv 2$ (mod $3$), all numbers have at least three divisors (\textit{arith} $a_{n}$): if $n=2^{s}\cdot h$ ($h$ odd, $s \in \{0,1,2,\ldots\}$) then, $1$, $2^{s+1}$, $6n$ are divisors of $n$ (\textit{arith} $a_{n}$). If we consider the set of integers greater than one with exactly three divisors, we obtain: $3,4,9,12,16,27,36,48,\ldots$ ($\{ 3^{i-1}\cdot 4^{j-1}>1: i,j \in \N \}$). See \seqnum{A025613}.

Now, with the previous calculation, we can compute the analogue of formula \eqref{eq:5} of Theorem \ref{theorem:3} and obtain:
\begin{enumerate}
	\item If $a \in O$ and $b\equiv 0$ (mod $3$): $\bigcup_{p}\Z \odot p = \Z \setminus \{ 0 \}$.
	\item If $a \in O$ and $b\equiv 1$ (mod $3$):\\
	$\begin{aligned} \textstyle \bigcup_{p}\Z \odot p &= \Z \setminus \{ \ldots,-14,-10,-8,-6,-2,0,2,6,8,10,14,\ldots \}=\\
	&=\Z \setminus \big \{ \{ 2^{2s-1}\cdot (2t+1): s\in \N, t \in \Z \} \cup \{  0 \} \big \}.
	\end{aligned}$
	\item If $a \in O$ and $b\equiv 2$ (mod $3$): $\bigcup_{p}\Z \odot p \ =\emptyset$.
	\item If $a \in E$ and $b\equiv 0$ (mod $3$): $\bigcup_{p}\Z \odot p \ = \ \Z \setminus \{ -1,1 \}$. 
	\item If $a \in E$ and $b\equiv 1$ (mod $3$):\\
$\begin{aligned} \textstyle \bigcup_{p}\Z \odot p &=\Z \setminus \{ \ldots,-9,-7,-4,-3,-1,0,2,5,6,8,11,14,\ldots \}= \\
&= \Z \setminus \big \{ \{ 3^{s-1}\cdot (3t+2): s \in \N, t \in \Z \} \cup \{ 0 \} \big \}. \end{aligned}$
	\item If $a \in E$ and $b\equiv 2$ (mod $3$):\\ 
	$\begin{aligned} \textstyle \bigcup_{p}\Z \odot p &=\Z \setminus \{\ldots,-6,-5,-4,-3,-2,-1,1,2,3,4,5,6,8,\ldots \}=\\
&= \{ t \cdot p: t \in \Z, \ p \text{ usual prime}, \ p\equiv 1 \ (\text{mod } 6)\}. \end{aligned}$ 
	\end{enumerate}
\end{example}

The next step could be to consider the arithmetic generated by a degree-$2$ polynomial. We could classify the results, as in Example \ref{example:5}, and finally, we could try to find a general theorem for an arithmetic generated by any polynomial. We postpone this work to another time. Notably, there are some interesting connections. For instance, if we consider the sequence $(a_n)=1, 6, 21,\ldots$, generated by the polynomial $p(x)=1+5x^{2}$, then the primes (\textit{arith} $a_{n}$) are: $2, 4, 6, 12, 18, 36, 54, \ldots$; which could also have been obtained with a chess puzzle. See \cite{elkies} and \seqnum{A068911}.

To finish the paper, motivated by Theorem \ref{theorem:1}, we study the squares of some arithmetics generated by sequences.
\begin{example} 
\label{example:6}
In the following cases, we compute $S_{a_{n}}=\{ a \odot_{a_{n}} a: a \in \N \}$.
\begin{enumerate}
	\item $(a_n)=0,1,2,3,4,\ldots \Rightarrow S_{a_{n}}=\{ 1,2,4,8,15,26, 42, 64, 93, 130 \ldots \}$. The ``cake numbers'' appear. See \seqnum{A000125}.
	\item $(a_n)= 1, 2, 4, 8, \ldots  \Rightarrow  S_{a_{n}}=\{1, 3, 7, 15, 31, 63, \ldots \}$. Mersenne numbers appear. In this concrete arithmetic, none of the Mersenne numbers is prime (\textit{arith} $a_{n}$).
	\item $(a_n)= 2, 3, 5, 7, \ldots \ (\text{usual primes}) \Rightarrow S_{a_{n}}=\{1, 4, 10, 21, 39, \ldots \}$. Convolution of natural numbers with $(1, p(1), p(2), ... )$, where $p(s)$ is the $s$-th prime. See \seqnum{A023538}.
	\item $(a_n)= 1, -1, 1, -1, \ldots \Rightarrow S_{a_{n}}=\{1,3,4,6,7,9,10,12 \ldots \}$. Numbers that are congruent to $0$ or $1$ (mod $3$). See \seqnum{A032766}. In this example is interesting to consider the set of cubes: $C_{a_{n}}=\{ (a \odot_{a_{n}} a) \odot_{a_{n}}a : \ a \in \N \}=\{ 1, 5, 7, 14, 17, 27, 31, 44, \ldots \}$. Maximum number of intersections in self-intersecting $n$-gon. See \seqnum{A105638}.
	\item $(a_n)= 0,1,0,1,0,1, \ldots  \Rightarrow  S_{a_{n}}=\{1,2,4,6,9,12,16,20,25,30, \ldots \}$. Quarter squares. See \seqnum{A002620}. If we consider the set of cubes: $C_{a_{n}}=\{ 1, 2, 7, 14, 29, 48, 79, \ldots \}$. Number of paraffins. See \seqnum{A005998}.
	\item $(a_n)= 1,-1, 0, 1,-1, 0, 0, 0, 1, -1, 0, 0, 0, 0, 0, 0, 1, -1, \ldots$ (the zeros follow the triangular number sequence) $\Rightarrow S_{a_{n}}=\{ 1, 3, 4, 5, 7, 8, 9, 10, 11, 13, 14, 15,16, 17, 18, 19, 20, 22,\\
23, \ldots \}$. This sequence coincides with the ``pancake numbers''. See \seqnum{A058986}. This coincidence deserves attention.
\end{enumerate}
\end{example}
\section{Conclusion} \label{conclusion}
In this paper, we have generalized the Peano arithmetic usual product to $\odot_{k}$. This fact has allowed us to extend Furstenberg's theorem of the infinitude of primes. The study of the arithmetic generated by $\odot_{k}$ is interesting. For instance, the fundamental $k$-\textit{arithmetic} theorem makes the powers of two appear to be primes of other arithmetics. Additionally, new versions of the classical conjectures of number theory are obtained that are connected with the usual ones. This point deserves attention. Finally, the extension of the main idea invites us to consider the arithmetic generated by any integer sequence. The number of new sequences and connections that appear is enormous; hence,  more work related to this topic is necessary.

\subsection*{Acknowledgements}
This work was supported by King Juan Carlos University under grant C2PREDOC2020.

{\small

}

\end{document}